\newtheorem{lemma}{Lemma}
\newtheorem{prop}{Proposition}
\newtheorem*{prop*}{Proposition}
\newtheorem*{theorem}{Theorem}
\newtheorem*{KLconj}{Kervaire-Laudenbach Conjecture}
\newtheorem*{maintheorem}{Main theorem}
\theoremstyle{definition}
\newtheorem*{defin}{Definition}
\newtheorem{quest}{Question}
\newtheorem{rem}{Remark}
\let\tilde\widetilde
\begin{document}

\title{Unimodular equations which
do not preserve the derived length
of a group\footnote{
This research was supported by the Russian Science Foundation under grant no. 22-11-00075}
}
\author{Mikhail A. Mikheenko
\\{\small
Faculty of Mechanics and Mathematics
of Lomonosov Moscow State University
}
\\{\small
Moscow Center for
Fundamental and Applied Mathematics}\\
{\normalsize mamikheenko@mail.ru}
}
\date{}
\maketitle

\begin{abstract}
It is a known fact that
any unimodular equation over an abelian group
has a solution in that group itself. It is also known that
for metabelian groups
this does not hold; moreover, there is a unimodular
equation over some metabelian group which
has no solutions in any larger metabelian group.
Here we present the proof of an analagous fact
for solvable groups of higher derived lengths.
\end{abstract}

\section{Introduction}

In this paper we adopt the following notation for
commutator and conjugation:
$g^h = h^{-1}gh, [g,h] = g^{-1}h^{-1}gh$.
An infinite cyclic group with the generator $g$
is denoted by $\langle g \rangle_\infty$, whereas
the cyclic group of order $n$ is denoted by $\langle g \rangle_n$.

$G'$ denotes the commutator subgroup of $G$,
while $G^{(n)}$ denotes the $n$th commutator subgroup of $G$.
The semidirect product of groups $B$ and $A$
with $B$ as the normal subgroup
is denoted by $B \leftthreetimes A$.

Here the direct wreath product $B\wr A$
of $B$ and $A$ is understood as
the semidirect product
$\left( \underset{a\in A}{\times} B_a \right) \leftthreetimes A$,
where $\underset{a\in A}{\times} B_a$ is
the direct product of isomorphic copies $B_a$
of $B$ marked by elements $a \in A$
and $A$ acts on the components of this product
as follows:
$\left( b_a \right)^{a_1} = b_{aa_1}$, where $b_a \in B_a, b_{aa_1} \in B_{aa_1}$ are
the elements corresponding to $b\in B$.

The present article is devoted to equations over groups.

\begin{defin}
The equation $w(x)=1$ over the group $G$,
where $w(x) \in G \ast \langle x \rangle_\infty$,
is {\it solvable in a group}
$\tilde G$ if $G \subset \tilde G$
and $\tilde G$ contains a solution to the equation
(i.e. there is an element
$\tilde g \in \tilde G$ such that $w(\tilde g)=1$).
In that case $\tilde G$ is called a {\it solution group}.

If the equation $w=1$ is solvable in some group $\tilde G$,
the equation $w=1$ over $G$ is called {\it solvable}.

\end{defin}

The solvability of a (finite or infinite) system of equations
in multiple variables over a group is defined likewise.
Equations and systems of equations over groups is a classical field of study,
see
\cite{Sh67, How81, B80, B84, K06, ABA21, EH91, BE18, EH21,
T18, NT22, Sh81, Le62, Kr85, KT17, KM23, KMR24, K93, KP95,
M24, M25, EdJu00, IK00, GR62, G83}.
There is a survey on equations over groups as well, see \cite{Ro12}.

The present article is concerned with unimodular equations.

\begin{defin}
An equation
$
w(x)=1
$
over a group $G$
is called
{\it unimodular},
if the exponent sum of $x$
in the word $w(x)$ equals $\pm1$.
\end{defin}

For example, the equation $xg_1x^{2}g_2x^{-4}=1$ is unimodular,
while the equations $x^2g_1x^{-4}=1$ and $x^{-1}g_1xg_2=1$
fail to be such.

Groups over which all unimodular equations are solvable
form a wide class, including:
\begin{itemize}
\item finite groups \cite{GR62}; 
\item residually finite groups
(it follows from the previous item);
\item locally residually finite groups, e.g. locally finite groups
(which also follows from the first item);
\item locally indicable groups
\cite{B80, Sh81, B84, How81}, i.e. groups
whose every non-trivial finitely generated subgroup
admits a surjective homomorphism onto
an infinite cyclic group;
\item hyperlinear groups \cite{P08} (it is unknown if this class coincides with
the class of all groups);
\item torsion-free groups \cite{K93}.
\end{itemize}

In fact, for most of these subclasses unimodular equations are not the only
ones which are proved to be solvable in mentioned papers. For
complete statements, see the articles cited above.

Also, there is a following conjecture on solvability
of unimodular equations over groups.
\begin{KLconj}
Any unimodular equation over any group is solvable.
\end{KLconj}
It is still yet unknown whether the conjecture is true.

Apart from seeking a solution of an equation over a group $G$
in any group $\tilde G$, no matter with which properties,
one can also ask if a solution can be found
in a group similar to the initial group.

For example, a unimodular equation $w(x)=1$
over an abelian group $A$ always has a solution in
$A$ itself
(it is an easy exercise if one notes that
since the solution is searched in an abelian group,
the variable can be assumed to commute with the coefficients).
It is in fact even true for nilpotent groups instead of abelian ones:
this is a result of A.L. Shmel'kin. Here is a very weakened
statement of that result:
\begin{theorem}[\cite{Sh67}]
Suppose that $G$ is a nilpotent group.
Then any unimodular equation over
$G$ has a solution in $G$.
\end{theorem}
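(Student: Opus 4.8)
The plan is to induct on the nilpotency class of $G$, exploiting the fact that a nontrivial nilpotent group has nontrivial center and that quotienting by a central subgroup lowers the class. First I would normalize so that the exponent sum of $x$ equals $+1$ (if it equals $-1$, replace $x$ by $x^{-1}$ throughout, which changes neither solvability nor the search group $G$), and write the equation in the form
$$w(x) = c_0 x^{a_1} c_1 x^{a_2} \cdots c_{n-1} x^{a_n} c_n, \qquad c_i \in G, \quad \sum_{i=1}^n a_i = 1.$$

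For the base case I would take $G$ trivial, where $x = 1$ solves the equation. For the inductive step, let $Z$ be a central subgroup with $G/Z$ of strictly smaller class (e.g.\ $Z = Z(G)$, or the last nontrivial term $\gamma_c(G)$ of the lower central series, which is central since $[\gamma_c(G),G] = 1$). Reducing the coefficients modulo $Z$ yields an equation $\bar w(x) = 1$ over $G/Z$ whose exponent sum is again $+1$, hence unimodular; by the induction hypothesis it has a solution $\bar x_0 \in G/Z$. Lifting $\bar x_0$ to an arbitrary $x_0 \in G$, the element $w(x_0)$ must lie in $Z$, since it maps to the identity of $G/Z$.

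The decisive step is to correct $x_0$ by a central factor. Putting $x = x_0 z$ with $z \in Z$ to be chosen, centrality gives $(x_0 z)^{a} = x_0^{a} z^{a}$ for every integer $a$, and each power $z^{a_i}$ may be slid freely to the front of the word. Collecting them yields
$$w(x_0 z) = z^{\,a_1 + \cdots + a_n}\, w(x_0) = z\, w(x_0),$$
where the final equality is exactly where unimodularity $\sum a_i = 1$ enters. Since $w(x_0) \in Z$, the choice $z = w(x_0)^{-1} \in Z$ forces $w(x_0 z) = 1$, so $x = x_0\, w(x_0)^{-1} \in G$ is the required solution.

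I expect the only genuine content to be the displayed identity $w(x_0 z) = z^{\sum a_i} w(x_0)$: centrality of $z$ lets all the $z$-powers be gathered, while unimodularity makes the accumulated exponent equal to $\pm 1$, so the central obstruction $w(x_0)$ can always be cancelled in $Z$ by simple inversion, with no divisibility condition to verify. Had the exponent sum been some other integer $m$, one would instead have to solve $z^{m} = w(x_0)^{-1}$ inside $Z$, which may be impossible; this is precisely why unimodularity is essential and foreshadows why the analogous statement can fail beyond the nilpotent setting. Everything else — the induction on class and the fact that unimodularity survives the central quotient — is routine.
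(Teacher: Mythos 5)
Your proof is correct, but there is nothing in the paper to compare it against: the paper states this theorem purely as a citation of Shmel'kin's result \cite{Sh67} (explicitly calling it ``a very weakened statement of that result'') and gives no proof of its own. Your argument is the standard central-correction proof, and it is complete: induct on the nilpotency class, pass to $G/Z$ for a central subgroup $Z$ (either $Z(G)$ or the last nontrivial term of the lower central series), lift a solution $\bar x_0$ of the reduced equation, observe that $w(x_0)\in Z$, and then use that for central $z$ one has $w(x_0z)=z^{a_1+\cdots+a_n}\,w(x_0)$, so unimodularity lets you cancel the obstruction by taking $z=w(x_0)^{-1}$. Every step checks out: $(x_0z)^{a}=x_0^{a}z^{a}$ for every integer $a$ because $z$ commutes with $x_0$; each $z^{a_i}$ slides to the front because $z$ is central; unimodularity survives the quotient because the $x$-exponents are untouched by reducing coefficients; and the induction terminates because a nontrivial nilpotent group has nontrivial center, so the class strictly drops. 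Your proof is precisely the generalization of the ``easy exercise'' the paper mentions for abelian groups, where the correction by a central element degenerates into the solution commuting with all coefficients. One minor simplification: the initial normalization to exponent sum $+1$ is dispensable, since when the sum is $-1$ you can simply take $z=w(x_0)$ instead of $w(x_0)^{-1}$.
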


Some of the earlier mentioned subclasses of groups
also admit solving unimodular equations over a group
in a group from the same subclass.
For instance, a unimodular equation over a finite group
is solvable in a finite group as well \cite{GR62},
and any unimodular equation over a locally indicable group
has a solution in a locally indicable group
\cite{How82}.

The situation is more complicated for the class
of solvable groups.
\begin{prop}[\cite{KMR24}]\label{BadMetab}
There is a unimodular equation $w(x)=1$
over a metabelian group $G$
such that the equation has no solutions in metabelian groups.

Moreover, $G$ can be chosen both to be finite
(of order $42$) or to be torsion free (and such that any unimodular
equation over $G$ has a solution in solvable groups
of derived length 3).
\end{prop}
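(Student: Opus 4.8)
The plan is to reduce solvability of a unimodular equation over a metabelian group to a purely module-theoretic divisibility question over the integral group ring of the abelianization, and then to rig that group ring so that a zero-divisor phenomenon blocks every metabelian solution. Assume the exponent sum of $x$ is $+1$ and suppose a metabelian $\tilde G \supseteq G$ contains a solution $\tilde x$. Put $A = \tilde G'$ (abelian) and $Q = \tilde G/A$, so $A$ is a $\mathbb{Z}[Q]$-module under conjugation. Reducing $w(\tilde x)=1$ modulo $A$ forces the image of $\tilde x$ in $Q$ to equal a fixed $t_0$ lying in $\bar G := G/G' \subseteq Q$ (I take $G$ split, $G = G' \leftthreetimes \bar G$, so $\bar G$ sits inside $G$). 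Writing $\tilde x = s_0 a$ with $s_0$ the copy of $t_0$ in $\bar G$ and $a \in A$ unknown, and collecting all occurrences of $a$ using that conjugation by $\tilde x$ on $A$ coincides with the action of $t_0$, the $A$-component of $w(\tilde x)=1$ becomes a single linear equation
\[ D\cdot a = -w_0 \qquad (\star) \]
in the module $A$, valid up to multiplication by an element of $\bar G$ (a unit of the group ring). Here $D = \partial w/\partial x$ is the Fox derivative evaluated at $x\mapsto t_0$ and $g_i \mapsto \bar g_i$, so $D \in \mathbb{Z}[\bar G]$ with augmentation $\epsilon(D)=1$, while $w_0 = w(s_0) \in G'$ is computed entirely inside $G$. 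Restricting the $\mathbb{Z}[Q]$-action down to $\mathbb{Z}[\bar G]$ lets me forget $Q$: a metabelian solution yields a $\mathbb{Z}[\bar G]$-module $A$ containing $G'$ in which $(\star)$ is solvable.

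Next I would convert $(\star)$ into an explicit obstruction. Let $\Lambda = \mathbb{Z}[\bar G]$ and form the universal extension $P = (G' \oplus \Lambda)/\Lambda\cdot(w_0, D)$, adjoining a formal solution $a$ with $Da = -w_0$. A short argument with the universal property shows $(\star)$ is solvable in some $\Lambda$-module containing $G'$ if and only if the canonical map $G' \to P$ is injective, and that this map fails to be injective precisely when there exists $\lambda \in \Lambda$ with $\lambda D = 0$ in $\Lambda$ but $\lambda w_0 \neq 0$ in $G'$. Thus it suffices to design $G$ and $w$ producing such a pair $(D,w_0)$ together with a witnessing annihilator $\lambda$; by the reduction, the equation then has no metabelian solution. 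Note this forces $D$ to be a zero-divisor, so $\Lambda$ must have zero-divisors, i.e. $\bar G$ must contain torsion.

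For the finite case I would take $G = \langle g\rangle_7 \leftthreetimes \langle t\rangle_6$, the Frobenius group of order $42$, with $t$ acting on $\mathbb{F}_7 \cong G'$ as multiplication by $3$, so that $\Lambda = \mathbb{Z}[s]/(s^6-1)$ and $G' \cong \Lambda/(7, s-3)$. Using the cyclotomic factorization $s^6-1 = \Phi_1\Phi_2\Phi_3\Phi_6$, I would put $D = \Phi_6(s) = s^2 - s + 1$ (augmentation $1$) and $\lambda = \Phi_1\Phi_2\Phi_3 = s^4 + s^3 - s - 1$, so that $\lambda D = 0$ while $\lambda$ acts on $G'$ as $\lambda(3) \equiv 6 \not\equiv 0 \pmod 7$; hence $\lambda w_0 \neq 0$ for every nonzero $w_0$. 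A word realizing this is $w = x g_1 x^{-1} g_2 x g_3$ with $\bar g_1 = \bar g_2 = t$: its Fox derivative is $1 - \bar g_1 + \bar g_1\bar g_2 = 1 - s + s^2 = D$, independently of $t_0$, and the $\mathbb{F}_7$-parts of the $g_i$ can be chosen to make $w_0 \neq 0$. This yields the order-$42$ example.

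Finally, the torsion-free case is where I expect the real difficulty. Since the obstruction needs torsion in $\bar G = G/G'$ while $G$ itself must be torsion-free, the extension $1 \to G' \to G \to \bar G \to 1$ is necessarily non-split, so the clean formula $w_0 = w(s_0)$ must be replaced by a computation tracking the relevant $2$-cocycle, and the annihilator identity $\lambda D = 0,\ \lambda w_0 \neq 0$ must then be arranged inside a torsion-free $\Lambda$-module $G'$. The accompanying assertion that the same equation becomes solvable in derived length $3$ I would prove by absorbing the torsion of $\bar G$ into a higher commutator: passing to a solvable $\tilde G$ with $\tilde G/\tilde G'$ torsion-free makes $\mathbb{Z}[\tilde G/\tilde G']$ a domain, which removes the zero-divisor obstruction and lets $(\star)$ be solved, at the cost of $\tilde G'$ being only metabelian rather than abelian. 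Verifying that such a $\tilde G$ can be built containing $G$ and honoring the original coefficients is the step I anticipate being the most delicate.
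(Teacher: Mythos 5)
Your argument for the finite case is, at bottom, the same mechanism as the paper's: your factorization $\lambda D=\Phi_1\Phi_2\Phi_3\cdot\Phi_6=s^6-1=0$ in $\mathbb Z[s]/(s^6-1)$ is exactly the identity $(1-a+a^2)(1+a)(1-a^3)=1-a^6$ that Lemma~\ref{secondcomm} uses to place $cc^ac^{-a^3}c^{-a^4}$ in $(\tilde G)''$. However, your reduction contains an unproved step: you assert $\bar G=G/G'\subseteq Q=\tilde G/\tilde G'$. For an arbitrary metabelian overgroup $\tilde G\supseteq G$ this is false in general; one only knows that the image of $G$ in $Q$ is $G/(G\cap\tilde G')$, and $G\cap\tilde G'$ can be strictly larger than $G'$. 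This is not a formality, because your obstruction genuinely needs the full group ring of $\mathbb Z/6$: over the proper quotients $\mathbb Z[\mathbb Z/3]$ and $\mathbb Z[\mathbb Z/2]$ the image of $D=s^2-s+1$ is not a zero divisor (it is invertible after tensoring with $\mathbb Q$), so no annihilator $\lambda$ exists there and your universal-module criterion yields no contradiction. The gap is patchable for your chosen $G$: $G\cap\tilde G'$ lies in the abelian group $\tilde G'$, hence is an abelian normal subgroup of $G$ containing $\mathbb F_7$, and since every $t^k$ with $1\leqslant k\leqslant 5$ acts nontrivially on $\mathbb F_7$, the only such subgroup is $\mathbb F_7=G'$; so the embedding you assumed does hold, but this must be said and used. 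Note that the paper's route (work inside $\tilde G$ modulo $(\tilde G)''$, where $x$ commutes with $a^6\in(\tilde G)'$) avoids this issue entirely, since it never needs $G/G'$ to embed into $\tilde G/\tilde G'$.

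The second half of the statement --- that $G$ can instead be chosen torsion free, and moreover so that every unimodular equation over it has a solution in a solvable group of derived length $3$ --- is not proved in your proposal. You correctly observe that torsion in $G/G'$ is forced (so the extension $1\to G'\to G\to\bar G\to 1$ must be non-split) and that a domain $\mathbb Z[\tilde G/\tilde G']$ would remove the zero-divisor obstruction, but you exhibit neither a torsion-free metabelian $G$ realizing the annihilator configuration nor any construction of the derived-length-$3$ solution groups; you explicitly defer both as ``the most delicate step''. Identifying the obstacles is not a construction, so as written your proposal establishes (after the patch above) only the order-$42$ case, i.e.\ the first assertion of Proposition~\ref{BadMetab}. (For what it is worth, the paper itself also does not reprove this ``moreover'' clause, citing \cite{KMR24} for it; but a self-contained proof of the full statement would have to supply it.)
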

Both groups $G$ from this proposition
have elements $a,c \in G$ such that
$a^6, c \in G'$, but $cc^ac^{-a^3}c^{-a^4}\neq1$.
Unimodular equations over them which have no solutions
in metabelian groups have the form $xx^{-a}x^{a^2}=c$.
The group of order $42$ is the semidirect product
$\langle c \rangle_7 \leftthreetimes \langle a \rangle_6$,
with $c^a = c^5$. Indeed, in this group
$$
c c^a c^{-a^3} c ^{-a^4}=c^{1+5-125-625} = c^{(1+5)(1-125)} = c^{6\cdot(-124)} \neq 1,
$$
as both $6$ and $124$ are not divisible by $7$.

It is yet unknown if the equation
$xx^{-a}x^{a^2}=c$ over
$\langle c \rangle_7 \leftthreetimes \langle a \rangle_6$
has a solution in somesolvable group.

In \cite{M24}, $\langle c \rangle_7 \leftthreetimes \langle a \rangle_6$
was proved to be
the minimal example of a metabelian group,
for which the proposition above holds. 

However, it was still unknown if there was an analagous
example of a solvable group with a unimodular equation
over it, which has no solutions in solvable groups
of the same derived length, 
for an arbitrary derived length (except 1, of course).

Here we fill this gap.
\begin{maintheorem}
For any natural number
$n\geqslant 2$
there is a unimodular equation over a solvable group of derived length
$n$ which has no solutions in solvable groups of the same derived length.
\end{maintheorem}

The author thanks the Theoretical Physics and Mathematics
Advancement Foundation
``BASIS''.
The author also thanks Anton A. Klyachko
for the more clear proof of Lemma \ref{secondcomm}
which was proposed on a seminar.

\section{Preliminaries}

\begin{lemma}\label{cneq}
Suppose that $G$ is a metabelian group with elements $a,c \in G$
such that $a^6, c \in G'$.
Also, suppose that $c^a \neq c^3$
(in particular, $c \neq 1$).
Then $c \neq cc^ac^{-a^3}c^{-a^4}$.
\end{lemma}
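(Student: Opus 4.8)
The plan is to translate everything into the language of the module over the group ring of $\langle a \rangle$ obtained by letting $a$ act on $G'$ by conjugation. Since $G$ is metabelian, $G'$ is abelian; as $c\in G'$ and $G'$ is normal, all conjugates $c^{a^i}$ lie in $G'$ and commute with one another, so I would write $G'$ additively and let $t$ denote the automorphism of $G'$ induced by conjugation by $a$, making the subgroup generated by the $c^{a^i}$ a module over $\mathbb{Z}[t]$. The hypothesis $a^6\in G'$ forces $a^6$ to commute with $c$ (both lie in the abelian group $G'$), hence $c^{a^6}=c$, that is $(t^6-1)c=0$. Under this dictionary $c^{a^i}$ becomes $t^i c$, the hypothesis $c^a\neq c^3$ becomes $(t-3)c\neq 0$, and the equality to be refuted, $c=cc^ac^{-a^3}c^{-a^4}$, becomes (after cancelling $c$) $(t-t^3-t^4)c=0$; since $t-t^3-t^4=-t(t^3+t^2-1)$ and $t$ is invertible modulo $t^6-1$, this is in turn equivalent to $(t^3+t^2-1)c=0$.

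The main step is then purely arithmetic in $\mathbb{Z}[t]$: I would argue by contradiction, assuming $(t^3+t^2-1)c=0$, and show that together with $(t^6-1)c=0$ this forces $(t-3)c=0$, contradicting the hypothesis. The crucial observation — and the only real idea in the proof — is that $t-3$ arises as a remainder in the Euclidean algorithm applied to $t^3+t^2-1$ and $t^6-1$, so that $t-3$ lies in the ideal they generate over $\mathbb{Z}$. Concretely I would exhibit the two identities
\[
t^2-t+1=(t^3+t^2-1)(t^3-t^2+t)-(t^6-1),\qquad t-3=(t^3+t^2-1)-(t^2-t+1)(t+2),
\]
valid over $\mathbb{Z}[t]$. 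Applying the first to $c$ gives $(t^2-t+1)c=0$, and then the second gives $(t-3)c=0$, i.e. $c^a=c^3$, the desired contradiction.

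The one point that deserves care — and where the hypotheses are sharp — is that these identities must hold over $\mathbb{Z}$ and not merely over $\mathbb{Q}$; this is exactly why the argument produces the linear remainder $t-3$ rather than the nonzero constant remainder (equal to $7$) that appears if one continues the Euclidean algorithm one step further, and it is this $t-3$ that the hypothesis $c^a\neq c^3$ is tailored to meet. I expect no genuine obstacle beyond locating these identities: once the module reformulation is in place, verifying them is a one-line expansion, and the rest is bookkeeping. I would close by remarking that the parenthetical ``$c\neq 1$'' is automatic, since $c=1$ would give $c^a=1=c^3$.
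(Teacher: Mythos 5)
Your proof is correct, but it takes a genuinely different route from the paper's. The paper argues by contradiction in purely group-theoretic terms: from $c = cc^ac^{-a^3}c^{-a^4}$ it first extracts $c^{a^3}=c^{-1}$, hence $c=c^2c^{2a}$, iterates conjugation to get $c^8=c$, i.e. $c^7=1$, and then runs a case analysis over the possible orders ($1$, $2$, $3$, $6$) of the automorphism that $a$ induces on $\langle c\rangle_7$, ruling out $c\mapsto c^3$ by hypothesis and $c\mapsto c^5$ by direct computation. You instead pass to the $\mathbb{Z}[t,t^{-1}]$-module structure on $G'$ and reduce everything to a single ideal-membership fact, $t-3\in\left(t^6-1,\;t^3+t^2-1\right)\subset\mathbb{Z}[t]$, witnessed by your two identities (both of which check out: $(t^3+t^2-1)(t^3-t^2+t)=t^6+t^2-t$, and $(t^2-t+1)(t+2)=t^3+t^2-t+2$); this eliminates the case analysis entirely and derives the forbidden relation $c^a=c^3$ in one stroke. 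Your approach is shorter and makes transparent exactly why $c^a\neq c^3$ is the right hypothesis: continuing the Euclidean algorithm shows the ideal in question is precisely $(7,\,t-3)$, so after the $7$-torsion the congruence $t\equiv 3$ is the only remaining degree of freedom. The paper's approach is more elementary — no ring-theoretic language — and yields as a by-product the concrete picture ($c$ of order $7$, $a$ acting as $c\mapsto c^5$) that matches the order-$42$ group of Proposition \ref{BadMetab}. Both arguments hinge on the same consequence of the hypotheses, namely $c^{a^6}=c$, which in your dictionary is $(t^6-1)c=0$.
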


\begin{proof}
Assume $c = c c^a c^{-a^3}c^{-a^4}$.
Note that in this case $c c^a c^{-a^3}c^{-a^4} \neq 1$ and
$$
c^{a^3} = c^{a^3} c^{a^4} c^{-1} c^{-a}  =
\left( c^a c c^{-a^4} c^{-a^3} \right)^{-1}
=\left( c c^a c^{-a^3}c^{-a^4}  \right)^{-1} = c^{-1},
$$
where the third equation holds because $c \in G'$
commutes with its conjugates.
Hence $c=cc^ac^{-a^3}c^{-a^4} = c^2 c^{2a}$.
Then $c = c^{-2a}=c^{4a^2}=c^{-8a^3}=c^{8}$.
Now it follows that:
\begin{itemize}
\item $c^7=1$. As $c\neq 1$, we get that the order of $c$ is $7$.
\item $\langle a \rangle$ acts on $\langle c \rangle_7$ by conjugation,
and $a^6$ acts trivially.
\end{itemize}

Consider the action of $a$ on $\langle c \rangle_7$.
This action can not be an automorphism of order $1$, $2$ or $3$,
since in each of these cases $cc^ac^{-a^3}c^{-a^4} = 1$.
That is why $a$ acts by an automorphism of order $6$.
So this automorphism is either $c \mapsto c^3$ or $c \mapsto c^5$.
But it can not be $c \mapsto c^3$ by condition, and for
$c \mapsto c^5$ we have
$$
cc^ac^{-a^3}c^{-a^4} = c^{(1+5)\cdot(1-125)} = c^{6 \cdot 2} = c^5 \neq c.
$$
We get a contradiction, hence the assumption
$c = c c^a c^{-a^3} c^{-a^4}$ is false.
\end{proof}

\begin{lemma}\label{secondcomm}
Suppose that $G\ni a,c$ is a metabelian group such that
$a^6, c \in G'$ and
$\tilde G$ is a solution group for the equation
$x x^{-a} x^{a^2} = c$.
Then $c c^a c^{-a^3} c ^{-a^4} \in \left(\tilde G\right)''$.
\end{lemma}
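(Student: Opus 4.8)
The plan is to reduce everything to the metabelian quotient $\bar G := \tilde G/\left(\tilde G\right)''$ and to show that the image of $c c^a c^{-a^3} c^{-a^4}$ there is trivial. Fix a solution $g\in\tilde G$, so that $g g^{-a} g^{a^2}=c$. My first step would be to exploit unimodularity to push the solution into the commutator subgroup: since $c\in G'\subseteq\left(\tilde G\right)'$, the element $c$ maps to $1$ in the abelianization $\tilde G/\left(\tilde G\right)'$, and there conjugation acts trivially, so the relation $g g^{-a} g^{a^2}=c$ collapses to the statement that the image of $g$ equals the image of $c$, namely $1$. Hence $g\in\left(\tilde G\right)'$ — this is exactly where the exponent-sum condition is used.

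Next I would pass to $\bar G$, where $\bar G' = \left(\tilde G\right)'/\left(\tilde G\right)''$ is abelian. Both $\bar c$ and $\bar g$ now lie in this abelian group, on which conjugation by $a$ is an automorphism, so I would record the action as a module structure over the group ring $\mathbb Z[\langle a\rangle]$, writing $\bar h^{a^k}$ as $a^k\cdot\bar h$ and switching to additive notation. This is legitimate precisely because every element occurring lies in $\bar G'$ and therefore commutes — which is exactly what the first step secured for $\bar g$ as well as for $\bar c$. In this language the equation becomes $\bar c=(1-a+a^2)\bar g$, and the element to be analysed becomes $(1+a-a^3-a^4)\bar c$.

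The computational heart is the polynomial identity
$$(1+a-a^3-a^4)(1-a+a^2)=1-a^6,$$
which I would obtain from the factorisations $1+a-a^3-a^4=(1-a^2)(1+a+a^2)$ and $(1+a+a^2)(1-a+a^2)=1+a^2+a^4$, so that the product telescopes as $(1-a^2)(1+a^2+a^4)=1-a^6$. Substituting the equation then gives
$$(1+a-a^3-a^4)\bar c=(1-a^6)\bar g=\bar g-\bar g^{a^6}.$$
Since $a^6\in G'\subseteq\left(\tilde G\right)'$, the element $\bar a^6$ lies in the abelian group $\bar G'$, so conjugation by it fixes $\bar g\in\bar G'$; thus $\bar g^{a^6}=\bar g$ and the displayed element vanishes. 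Translating back, $c c^a c^{-a^3} c^{-a^4}\in\left(\tilde G\right)''$, as required. I expect the main obstacle to be conceptual rather than computational: one must first realise that unimodularity alone forces $g\in\left(\tilde G\right)'$, so that the additive formalism is available for the solution and not merely for $c$, and then spot the factorisation that produces the factor $1-a^6$, whose triviality on $\bar g$ is precisely what the hypothesis $a^6\in G'$ delivers. Once those two points are in place, the remaining polynomial arithmetic is routine.
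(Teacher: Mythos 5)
Your proof is correct and takes essentially the same route as the paper: unimodularity forces the solution into $\left(\tilde G\right)'$, one passes to $\tilde G / \left(\tilde G\right)''$ where the exponential (module) notation applies, and the identity $(1+a-a^3-a^4)(1-a+a^2)=1-a^6$ together with $a^6 \in G' \subseteq \left(\tilde G\right)'$ finishes the argument. The only cosmetic difference is the factorization used to verify that identity: you write $1+a-a^3-a^4=(1-a^2)(1+a+a^2)$, while the paper uses $(1+a)(1-a^3)$ and the factor $(1-a+a^2)(1+a)=1+a^3$.
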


\begin{proof}
Notice that $x \in \left( \tilde G \right)'$ $\biggl($as it equals $c$ modulo
$\left( \tilde G \right)'$$\biggr)$. So, $x^a, x^{a^2}, \ldots, x^{a^5}$
lie in $\left( \tilde G \right)'$ as well.

Now consider $\tilde G$
modulo its second commutator subgroup $\left( \tilde G \right)''$.
In that case the elements of $\left(\tilde G\right)'$, including
$x,x^{a},\ldots,x^{a^5}$, can be assumed to commute with each other.
Therefore we adopt the following notation:
$$
h^{a+b} = h^ah^b
$$
for $h \in \left( \tilde G\right)', a,b \in \tilde G$.
Consider the element
$g = c c^a c^{-a^3} c ^{-a^4}$. Substitute
$c$ with $x^{1-a+a^2}$ there.
In our notation we get
$$
g = c^{1+a-a^3-a^4} = \left( x^{1-a+a^2} \right)^{1+a-a^3-a^4}
=
x^{(1-a+a^2)(1+a-a^3-a^4)} = x^{(1-a+a^2)(1+a)(1-a^3)}=x^{1-a^6}=1.
$$
Thus, $g = 1$ modulo $\left( \tilde G \right)''$.
This means that $g\in \left(\tilde G \right)''$, as needed.

\end{proof}

\begin{rem}
In particular, if $c c^a c^{-a^3} c ^{-a^4} \neq 1$,
then the equation $xx^{-a}x^{a^2}=c$ over
$G$ from the condition of the lemma
is unsolvable in metabelian groups.
\end{rem}

\section{The proof of the main theorem}

Remind the statement of the main theorem.

\begin{maintheorem}
For any natural number
$n\geqslant 2$
there is a unimodular equation over a solvable group of derived length
$n$ which has no solutions in solvable groups of the same derived length.
\end{maintheorem}

\begin{proof}
Consider a group $H = B \wr G$, where $B$ is an arbitrary
solvable group
of derived length exactly $n-2$, whereas
$G\ni a,c$ is a metabelian group such that
$a^6, c \in G'$ but $cc^{a}c^{-a^3}c^{-a^4} \neq 1$.
We can choose $G$ as one of the groups from Proposition \ref{BadMetab},
e.g. $\langle c \rangle_7 \leftthreetimes \langle a \rangle_6$
with $c^a = c^5$.
Consider the equation $x x^{-a} x^{a^2} = c$ over $H$,
where $a,c$ are the elements of the subgroup
$G$ corresponding to the lemma.

Suppose that $\tilde H$ is a solution group for this equation.
Consider $\left( \tilde H \right)''$.
\begin{itemize}
\item On the one hand, $\left( \tilde H \right)''$ contains the elements of the form
$b b^{-c}b^{-a} b^{ac}$, where $b \in B$
(notice that $cc^{a}c^{-a^3}c^{-a^4} \neq 1$ implies $a \notin G'$ и $c \neq 1$;
so, $1,c,a,ac$ are four distinct elements of $G$).
Indeed, $H' \subset \left( \tilde H \right)'$ contains
$[b,a] = b^{-1} b^a$.
Therefore, $H'' \subset \left( \tilde H \right)''$ contains
$$
[b^{-1}b^a , c] = b^{-a} b b^{-c} b^{ac} = b b^{-c} b^{-a} b^{ac},
$$
as the different copies of $B$ commute with each other.
For convenience, write this element as $h = b^{1-c-a+ac}$.

\item On the other hand, Lemma \ref{secondcomm} states that
$\left( \tilde H \right)''$ also contains $g=cc^ac^{-a^3}c^{-a^4}\neq 1$.
\end{itemize}
Then consider the commutator
$[g^{-1},h] = b^{(1-c-a+ac)(1-g^{-1})}$.
It is an element of the direct product of copies of $B$.
We state that its coordinate coresponding to $1 \in G$ equals $b$.
For this to be true, it is enough to prove that
the term $1$
appears
in the expression
$(1-c-a+ac)(1-g^{-1}) \in \mathbb ZG$
only once.
Notice that the term with the element $a$ does not equal
$1$, as $a$ does not lie in the commutator subgroup of $G$
(or else $cc^ac^{-a^3}c^{-a^4}$ equals $1$), while $g^{-1}$ and $c$ lie there.
Also note that $c \neq 1 \neq g^{-1}$. It remains to understand that $cg^{-1} \neq 1$,
or, in other words, that $c \neq g$, which is true by Lemma \ref{cneq}
since $c^a = c^5 \neq c^3$.

As a result we get that the third commutator subgroup of $\tilde H$
contains the elements $[g^{-1}, h]$ of the direct product of copies of
$B$, and their coordinates corresponding to $1 \in G$ run over all the group $B$.
Therefore the subgroup generated by those elements
lies in the $3$rd commutator subgroup of $\tilde H$
and has a nontrivial
$\left( n - 3 \right)$th commutator subgroup.
Thus, $\left (\tilde H\right)^{(n)}\neq \{1\}$, as needed.
\end{proof}

\begin{rem}
Note that since the group $G$ (and the group $B$)
can be chosen either to be finite or to be torsion free,
the group $H$ can be chosen to satisfy one of these properties as well.
\end{rem}

\section{Further questions and surmises}

The following question remains open.

\begin{quest}\label{questmetab}
Does the equation
$xx^{-a}x^{a^2}=c$
over
$\langle c \rangle_7
\leftthreetimes
\langle a \rangle_6$,
where $c^a=c^5$,
have a solution in some solvable group?
\end{quest}

The following proposition gives a slight hope
for the negative answer.

\begin{prop} \label{hope}
Suppose that $G$ is a metabelian group with the elements $a,c \in G$
such that $a^6, c \in G'$ and $cc^ac^{-a^3}c^{-a^4}\neq 1$.
Denote $g = c c^a c^{-a^3} c^{-a^4}$.
If $c$ has a finite order coprime to $6$,
then $g g^a g^{-a^3} g^{-a^4} \neq 1$
and $g$ has a finite order coprime to $6$. %
\end{prop}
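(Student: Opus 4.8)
The plan is to move everything inside the abelian group $G'$ and recast the statement as a fact about a module over a group ring, where the hypothesis $\gcd(\mathrm{ord}(c),6)=1$ will let me invert the relevant operators. Since $G$ is metabelian, $G'$ is abelian, so $c$, all its conjugates $c^{a^i}$, and $g$ itself lie in $G'$ and pairwise commute. I write conjugation by $a$ additively as multiplication by a symbol $t$, so that $h^{a^i}$ becomes $t^i\cdot h$ for $h\in G'$; because $a^6\in G'$ and conjugation by an element of the abelian group $G'$ acts trivially on $G'$, we have $t^6=1$. Hence $M=\langle c^{a^i}\rangle\subseteq G'$ is a module over $\mathbb Z[t]/(t^6-1)$, and since every generator $c^{a^i}$ has order $m=\mathrm{ord}(c)$, the group $M$ has exponent dividing $m$, so $M$ is a module over $R_m=(\mathbb Z/m\mathbb Z)[t]/(t^6-1)$. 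With $P(t)=1+t-t^3-t^4$, the hypothesis becomes $g=P(t)\cdot c\neq0$, and the two conclusions become: $P(t)\cdot g\neq0$, and $g$ has order coprime to $6$.

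The second conclusion is immediate: $g=P(t)\cdot c\in M$, and $M$ has exponent dividing $m$, which is coprime to $6$; as $g\neq0$, its order is a divisor of $m$ greater than $1$, hence coprime to $6$.

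For the first conclusion I would use the cyclotomic factorization of $P$. One checks that $P(t)=-\Phi_1(t)\Phi_2(t)\Phi_3(t)$, where $\Phi_d$ denotes the $d$-th cyclotomic polynomial, and consequently $P(t)\Phi_6(t)=-\Phi_1\Phi_2\Phi_3\Phi_6=-(t^6-1)$, which is $0$ in $R_m$. The key algebraic input is that $\Phi_1\Phi_2\Phi_3$ and $\Phi_6$ are comaximal in $(\mathbb Z/m\mathbb Z)[t]$: their resultant over $\mathbb Z$ equals $\pm12$, which lies in the ideal they generate, and since $\gcd(m,6)=1$ this is a unit modulo $m$; hence there exist $u,v\in R_m$ with $uP+v\Phi_6=1$. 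Applying this identity as an operator to $g$ and using $\Phi_6\cdot g=\Phi_6 P\cdot c=(P\Phi_6)\cdot c=0$ gives
$$
g=u\cdot(Pg)+v\cdot(\Phi_6 g)=u\cdot(Pg).
$$
Therefore $Pg=0$ would force $g=0$, contradicting $g\neq0$; hence $P(t)\cdot g\neq0$, that is, $gg^ag^{-a^3}g^{-a^4}\neq1$.

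The verifications that $P=-\Phi_1\Phi_2\Phi_3$, that $t^6=1$ on $M$, and that the resultant of $\Phi_1\Phi_2\Phi_3$ with $\Phi_6$ equals $\pm12$ are all direct calculations. The one point deserving care, and the main obstacle, is justifying the comaximality of the two factors modulo $m$ and hence the existence of the Bézout operators $u,v$; this is precisely where the coprimality of $\mathrm{ord}(c)$ with $6$ enters, since it is what makes the bad primes $2$ and $3$ appearing in the resultant invertible. Once that is in place, the entire first conclusion collapses to the single operator identity $g=u\cdot(Pg)$.
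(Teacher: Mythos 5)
Your proof is correct, and it reaches the conclusion by a genuinely different mechanism than the paper's. The paper stays entirely at the level of elementary element manipulation: writing $g=c^{(1+a)(1-a^3)}$, it first derives the two identities $gg^{-a}g^{a^2}=1$ (which is exactly your relation $\Phi_6\cdot g=0$, since $(1-t+t^2)(1+t)(1-t^3)=1-t^6$) and $g^{a^3}=g^{-1}$; then, assuming $gg^ag^{-a^3}g^{-a^4}=1$, it rewrites this as $g^2g^{2a}=1$, uses the oddness of the order of $g$ to get $g^a=g^{-1}$, substitutes into the first identity to obtain $g^3=1$, and finally uses coprimality with $3$ to force $g=1$, a contradiction. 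In effect, the paper performs your B\'ezout combination by hand, stripping off the prime $2$ and then the prime $3$ one at a time, whereas you strip off both at once by noting that the resultant $\pm12$ of $-\Phi_1\Phi_2\Phi_3$ and $\Phi_6$ becomes a unit modulo $m$. What your route buys is conceptual clarity and generality: it shows that over $(\mathbb Z/m\mathbb Z)[t]/(t^6-1)$, multiplication by $P$ is injective on the image of $P$ in any module, so the statement iterates at once to $P^k\cdot c\neq 0$ for all $k$ (which is pertinent to the discussion following the proposition), and it isolates exactly where the hypothesis $\gcd(\mathrm{ord}(c),6)=1$ enters, namely at the primes dividing the resultant. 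What the paper's route buys is brevity and self-containedness: no cyclotomic factorization, no resultants, no appeal to the fact that $\mathrm{Res}(f,g)$ lies in the ideal $(f,g)$ --- just a few lines of computation inside the group.
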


\begin{proof}
Note that if the order of $c$ is $n$, then
$g^n = 1$, and the order of $g$ divides $n$.
Particularly, if $n$ is coprime to $6$, then the order of
$g$ is also coprime to $6$ (and is not equal to $1$ by condition).

Then notice that $g g^{-a} g^{a^2} = c^{(1-a+a^2)(1+a)(1-a^3)}=1$.
Also notice that $g^{a^3} = g^{-1}$,
and the condition $g g^a g^{-a^3} g^{-a^4} = 1$ can be rewritten as
$g^2 g^{2a} = 1$. Since $g$ has an odd order, it follows that $gg^a=1$, so $g^a=g^{-1}$.
Therefore $gg^{-a}g^{a^2}=g^3=1$.
But this contradicts with the condition of the proposition.
So the assumption $gg^ag^{-a^3}g^{-a^4}=1$ is false.
\end{proof}

\begin{rem}
If the order of $c$ is even or divisible by $3$, then
$gg^ag^{-a^3}g^{-a^4}$ can be equal to $1$.
To see this, consider the group
$\left( \langle b \rangle_n \times \langle d \rangle_n \right)\leftthreetimes \langle a \rangle_6$
with the action $b^a = bd; d^a = b^{-1}$.
The automorphism acts as follows:
$$
b \mapsto bd \mapsto d \mapsto b^{-1} \mapsto b^{-1}d^{-1} \mapsto d^{-1} \mapsto b.
$$
Take $c = b = \left[ b^{-1}d^{-1}, a \right]$.
Then
$$
g = b \cdot bd \cdot b \cdot bd = b^4d^2,
$$
$$
gg^ag^{-a^3}g^{-a^4} = b^4d^2 \cdot b^2d^4 \cdot b^4d^2 \cdot b^2d^4 = b^{12}d^{12}. 
$$
Now we see that if $n$ equals $3$ or $4$, then $g\neq 1$, but
$gg^ag^{-a^3}g^{-a^4}=1$.
\end{rem}
Now if for the elements $a, c$ from Proposition
\ref{hope} there is an element $y$ of a solution group $\tilde G$ of equation $xx^{-a}x^{a^2} = c$
such that $yy^{-a}y^{a^2}=g$,
then such group, possibly, will have non-trivial fourth
commutator subgroup as well as the third one.
Likewise, $\tilde G$ would have a non-trivial
fifth commutator subgroup an so on, in other words,
$\tilde G$ will fail to be solvable.
However, it is unclear if such element $y$ exists and
whether the nontriviality of the fourth commutator subgroup
actually follows from the existence of $y$.
In other words, the question needs further consideration.

The following question is connected to Question \ref{questmetab}.
\begin{quest} \label{questsolv}
Is it true that any unimodular equation over a solvable group
has a solution in a solvable group (maybe of larger derived length) as well?
\end{quest}
If the answer to Question \ref{questmetab} is negative,
then the answer to Question \ref{questsolv}
is negative as well.
And if the answer to Question \ref{questmetab}
is positive, then, probably, the Question \ref{questsolv}
will have the positive answer for the groups from the proof of the main theorem.

\bibliography{biblioeng}

\end{document}